\def\qbox#1{\quad\hbox{#1}}
\def\qqbox#1{\quad\hbox{#1}\quad}
\def\Z{\mathbb Z}
\let\ge\geqslant
\def\gp#1{\left\langle#1\right\rangle}
\def\({\left(}
\def\){\right)}
\let\tmpl\{
\def\{{\left\tmpl}
\let\tmpr\}
\def\}{\right\tmpr}
\newbox\tmpbox
\newdimen\tmpdim
\def\narrow[#1]#2\par 
\tmpbox\hbox{#2}%
\tmpbox\vbox{\hsize=#1 #2}%
\tmpbox\vbox{\hsize=\wd\tmpbox \advance\hsize by -1pt
#2}%
\tmpbox\vbox{\hsize=\wd\tmpbox \advance\hsize by 1pt #2}%
\def\disp#1{
                $$
                \setbox\tmpbox\vbox{\narrow[\hsize]\noindent#1\par}
                \box\tmpbox
                $$
          }
\newtheorem*{Myasnikov--Romankov Theorem}{Myasnikov--Roman'kov Theorem
{\rm\cite{[MR14]}}}
\newtheorem*{Main Theorem}{Main Theorem}
\newtheorem{Lemma}{Lemma}
\newtheorem{Corollary}{Corollary}
\title{Verbally closed virtually free subgroups
}
\author{Anton A. Klyachko \qquad Andrey M. Mazhuga
}
\date{\small
      Faculty of Mechanics and Mathematics\\
      Moscow State University\\
      Moscow 119991, Leninskie gory, MSU\\
      klyachko@mech.math.msu.su\quad mazhuga.andrew@yandex.ru
}
\begin{document}
\maketitle
\begin{abstract}
\vskip-10mm
\narrower
\narrower
\narrower
\noindent
A theorem of Myasnikov and Roman'kov says that any verbally closed
subgroup of a finitely generated free group is a retract. We
prove that all free (and many virtually free) verbally
closed subgroups are retracts in \emph{any} finitely
generated group.
\end{abstract}

{\def\thefootnote{}\addtocounter{footnote}{-1}%
\footnote{%
This work was supported by the Russian Foundation for Basic Research,
project no. 15-01-05823.} }


\section{Introduction}

A subgroup $H$ of a group $G$ is called \emph{verbally closed}
\cite{[MR14]}
(%
see also
\cite{[Rom12]},
\cite{[RKh13]},
\cite{[Mazh17]})
if
any equation of the form
$$
w(x_1,x_2,\dots)=h,
\qbox{where $w$ is an element of the free group
$F(x_1,x_2,\dots)$ and $h\in H$,
}
$$
having a solution in $G$ has a solution in $H$.
If each finite system of equations with coefficients from $H$
$$
\{w_1(x_1,x_2,\dots)=1, \dots, w_m(x_1,x_2,\dots)=1\},
\qbox{where $w_i\in H*F(x_1,x_2,\dots)$,}
$$
having a solution in $G$ has a solution in $H$, then the subgroup $H$ is
called \emph{algebraically closed} in $G$.

Clearly, any retract (i.e. the image of an endomorphism $\rho$ such
that $\rho\circ\rho=\rho$) is an algebraically closed subgroup.
It is easy to show \cite{[MR14]} that for finitely presented groups
the converse
is also true:
\disp{\hfuzz15pt\sl
A finitely generated
subgroup of a finitely presented group is algebraically closed
if and only if it is a retract.
}%
For the (wider) class of verbally closed subgroups, no similar
structural description is known. However, in free groups, the situation
is simple: verbally closed subgroups, algebraically closed subgroups, and
retracts are the same things.

\begin{Myasnikov--Romankov Theorem}
Verbally closed subgroups of finitely generated free groups are
retracts.
\end{Myasnikov--Romankov Theorem}

A similar fact is valid for  free nilpotent groups
\cite{[RKh13]}.
We generalise the Myasnikov--Roman'kov theorem in two directions:
first, we study subgroups of
arbitrary groups; and secondly, we study not only free subgroups
but also virtually free subgroups $H$,
i.e. containing free subgroups of finite index (in $H$).

\goodbreak

\begin{Main Theorem}
Let $G$ be any group and let $H$ be its
verbally closed
virtually free
infinite
non-dihedral
subgroup containing no
infinite abelian noncyclic subgroups.
Then
\begin{enumerate}
\item[\rm1)]
$H$ is algebraically closed in $G$;
\item[\rm2)]
if $G$ is finitely generated over $H$
{\rm (i.e. $G=\gp{H,X}$ for some finite subset $X\subseteq G$)},
then $H$ is a
retract of~$G$;
\item[]
in particular, $H$ is finitely generated if
$G$ is finitely generated.
\end{enumerate}
{\rm(For an infinite group,
\emph{non-dihedral} means non-isomorphic to the free product of two
groups of order two.)}
\end{Main Theorem}

\noindent
Note that each
nontrivial
free subgroup $H$ satisfies all conditions of the theorem
and, therefore, is a retract of any finitely generated group
containing $H$ as a verbally closed subgroup.
Even this
corollary seems to be nontrivial.
The following corollary strengthens Theorem 1(1) of~\cite{[Mazh17]}.

\begin{Corollary}
In a free product of finitely many finite groups, any
verbally closed infinite non-dihedral subgroup
is a retract.
\end{Corollary}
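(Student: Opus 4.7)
The plan is to derive the Corollary directly from the Main Theorem by verifying all of its hypotheses for the given $H$. Let $G = G_1 * \cdots * G_n$ be a free product of finitely many finite groups and $H \le G$ a verbally closed infinite non-dihedral subgroup.

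Several hypotheses are immediate. $G$ is finitely generated (by the union of the factors), so in particular it is finitely generated over $H$, supplying the hypothesis of part~(2) of the Main Theorem. Moreover, $G$ acts on its Bass--Serre tree $T$ with finite vertex stabilizers (conjugates of the $G_i$) and trivial edge stabilizers; the kernel of the projection $G \to G_1 \times \cdots \times G_n$ intersects each vertex stabilizer trivially, hence acts freely on $T$, is a free group, and has finite index in~$G$. Thus $G$, and therefore its subgroup $H$, is virtually free.

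The only substantive hypothesis to check is that $H$ contain no infinite abelian noncyclic subgroup. I would prove the stronger claim that every infinite abelian subgroup $A$ of $G$ is infinite cyclic. Since $G$ is virtually free, its torsion subgroups are finite, so $A$ contains an element $t$ of infinite order; $t$ acts hyperbolically on $T$ with a unique invariant axis $\ell$, and any $a \in A$ commutes with $t$ and hence preserves $\ell$ setwise (axes of commuting hyperbolic isometries of a tree coincide). Suppose $a \in A$ has finite order. Then $a$ fixes some vertex of $T$, and its action on $\ell \cong \mathbb{R}$ is a non-trivial reflection; since $G$ acts on $T$ without inversions, the fixed point of this reflection is a vertex $v \in \ell$. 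The commutation $tat^{-1} = a$ then forces $a$ to fix $tv$ as well, hence the entire edge-path from $v$ to $tv$ --- but edge stabilizers are trivial, so $a = 1$. Thus $A$ is torsion-free and virtually cyclic, i.e., infinite cyclic.

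With every hypothesis of the Main Theorem verified, it yields the required retraction $G \to H$. The crux of the argument is the Bass--Serre analysis of abelian subgroups; the remaining conditions follow from basic structure theory of free products of finite groups.
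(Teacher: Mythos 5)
Your proof is correct and is essentially the paper's intended argument: the Corollary is stated as an immediate consequence of the Main Theorem, and you verify exactly the hypotheses (finite generation over $H$, virtual freeness, and the absence of infinite noncyclic abelian subgroups) that the paper leaves implicit as standard facts about free products of finite groups. Your Bass--Serre verification is sound --- the only steps left tacit, namely that a nontrivial finite-order $a$ could also act as the identity on $\ell$ (but then it would fix an edge, so $a=1$ by triviality of edge stabilizers) and that $A$ is virtually cyclic (because $A\cap K$ is an abelian subgroup of the free group $K$, hence cyclic), are one-line observations --- so the application of part 2) of the Main Theorem goes through.
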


In Section 2, we discuss examples showing that the main theorem
cannot be improved in some sense (there are open questions
remaining though). Section 3 contains the proof of the theorem. Our
argument is slightly tricker than that in \cite{[MR14]} but is also based
on the use of Lee words \cite{[Lee02]}.

Let us fix the notation.
If
$k\in \Z$, $x$ and $y$ are elements of a group, then $x^y$, $x^{ky}$,
and $x^{-y}$ denote $y^{-1}xy$, $y^{-1}x^ky$, and $y^{-1}x^{-1}y$,
respectively.
The commutator subgroup of a group $G$ is denoted by $G'$.
If $X$ is a subset of
a group, then $|X|$, $\gp X$,
and $C(X)$ mean the cardinality of $X$, the subgroup generated by $X$,
and the centraliser of~$X$.
The index of a subgroup $H$ of a group $G$ is denoted by $|G{:}H|$.
The symbol $A*B$ denotes the free product of groups $A$ and $B$.
$F(x_1,\dots,x_n)$ or $F_n$
is the free group (with a basis $x_1,\dots,x_n$).


\section{Examples}

Let us see whether it is possible to
omit some conditions of the Main Theorem.

\smallskip\noindent{\bf
The subgroup $H$ is infinite.}
This condition cannot be removed.
Let
$G$ be
the central product of two copies of the quaternion group
(of order eight). Clearly, the factors of this
product are not retracts
(and, therefore, they are not
algebraically closed because, for finitely generated subgroups of finitely
presented groups, this
is the same thing).
Indeed, the kernel of such hypothetical retraction must
be a nontrivial normal subgroup and the group $G$ is nilpotent.
Therefore, this nontrivial normal subgroup must nontrivially
intersect the centre of $G$ (see, e.g., \cite{[KaM82]}). This
leads immediately to a contradiction because the centre in this case
is contained in both factors.

Let us show that (for instance) the second factor $H$ of
$G=Q_8\mathop\times\limits_C Q_8$ (where $C=\{\pm1\}$)
is verbally closed in $G$.
Suppose that an equation
$$
w(x_1,\dots,x_n)=(1,h'),
$$
where $(1,h')\in H$ has a solution
$x_i=(h_i,h_i')$ in
$G$. Let us show that this equation has a solution in
$H$ too.
Suppose that
the sum $k$ of powers of
a variable $x_i$ in
$w(x_1,\dots,x_n)$ is odd.
In this case, the equation $x^k = h'$ has a solution $q$ in $Q_8$
and the substitution
$x_i = (1,q)$ and $x_j = (1,1)$ for $j\ne i$ is
a solution to the initial equation in $H$.

Now, suppose that
all variables occur in $w(x_1,\dots,x_n)$ with
even total powers. In this case, $h'=1$ or $h'=-1$
(otherwise the equation has no solutions in $G$).
If
$h'=1$, then the substitution $x_i=(1,1)$ for all $i$ is a solution
belonging to $H$. If  $h'=-1$, then either
$x_i=(1,h_i)$ or $x_i'=(1,h_i')$
is a solution (lying in $H$).

\smallskip\noindent{\bf
The subgroup $H$ is non-dihedral.}
We do not know whether this condition may be omitted and leave it as an
open question.

\smallskip\noindent{\bf
Any infinite abelian subgroup of $H$ is cyclic.}
A slight modification of the central product considered above shows that
this condition may not be omitted.
Put
$G=Q_8\mathop\times\limits_C Q_8\times F$,
where $F$ is a nontrivial free group.
Consider
the product of the second and third factors as
the subgroup
$H$.
The above argument implies that $H$ is verbally
closed but not a retract
(because a retract of a retract is a retract and the second factor is not
a retract of $G$
and even of $Q_8\mathop\times\limits_C Q_8$
as proved above).

\smallskip\noindent{\bf
The subgroup $H$ is virtually free.}
This condition may not be omitted.
Indeed, it is known that the free Burnside group
$B(m,n)$ of rank $m\ge2$ and
odd
exponent $n\ge665$
is infinite while all its abelian subgroups are finite
(see, e.g, Theorems 1.5 and 3.3 in \cite{[Adjan]}).
Put $G=Q_8\mathop\times\limits_C Q_8\times B(2,2017)$
and let $H$ be the product of the second and third factors.
Similarly to the above example, we see that $H$ is verbally closed but not
a retract.

\smallskip\noindent{\bf
The group $G$ is finitely generated over $H$ in assertion 2).}
The following example shows that this condition
may not be omitted.
Take the subgroup $H=\gp{1,1,\dots)}$
in the Cartesian (=unrestricted) sum $\Z_2\oplus\Z_3\oplus\Z_5\oplus\dots$
of the prime-order cyclic groups.
Clearly, $H$ is verbally closed
(in abelian
groups, verbally closed subgroups are the same thing as algebraically
closed subgroup, and the same as serving (pure) subgroups).
On the other hand, $H$ is not a direct summand in $G$
(i.e. $H$ is not a retract). Indeed,
if $G=H\oplus D$, then the torsion subgroup $T(G)$ of $G$
(the direct (=restricted) sum of $\Z_p$) must lie in $D$;
but then 
\let\iso\simeq
$H\iso\Z$ is a direct summand in 
$G\iso H\oplus\bigl(D/T(G)\bigr)$,
which is impossible as $G/T(G)$ is a divisible group.%
\footnote{%
The journal version of the paper contains an error in this example.
The authors thank 
Mikhail Mikheenko for pointing this out in 2023.
}

Take the subgroup $H=\gp1$ of integers in the additive group $G$
of $p$-adic integers. Clearly, $H$ is verbally closed (in abelian
groups verbally closed subgroups are the same thing as algebraically
closed subgroup, and the same as serving (pure) subgroups).
On the other hand, $G$ admits no
nontrivial decompositions into a direct sum (see, e.g., \cite{[Kur67]}),
hence, $H$ is not a retract.


\section{Proof of the Main Theorem}

Note that any virtually free group $H$ is
linear (even over $\Z$ if $H$ is countable) because the free
group
(of any cardinality)
is linear (over a field)
(see, e.g., \cite{[KaM82]}), and virtually
linear group is also linear. Therefore, $H$ is
\emph{equationally Noetherian}
\cite{[BMRe99]}, i.e.
any system of equations with coefficients from $H$
and finitely many unknowns is
equivalent to its finite
subsystem.%
\footnote{Note also that a group containing an equationally Noetherian
subgroup of finite index, is equationally Noetherian
\cite{[BMRo97]}.
}
This, in
turn, means that $H$
is a retract of any finitely
generated over $H$ group containing $H$ as an algebraically closed
subgroup \cite{[MR14]}. Therefore, it suffices to prove assertion 1)
of the theorem.

First, suppose that $H$ is cyclic.
Recall that any integer matrix can be reduced to a diagonal matrix by
integer elementary
transformations.
This means that any finite system of equations over $H$
$$
\{w_1(x_1,x_2,\dots)=1,\ \dots,\ w_m(x_1,x_2,\dots)=1\},
\qbox{where $w_i\in H*F(x_1,x_2,\dots)$,}
$$
can be reduced to the form
$$
\{x_1^{n_1}u_1(x_1,x_2,\dots)=h_1,\ \dots,\
x_m^{n_m}u_m(x_1,x_2,\dots)=h_m\},
\hbox{ where $u_i\in (H*F(x_1,x_2,\dots))'$, $n_i\in\Z$, $h_i\in H$,}
$$
by means of a finite sequence of transformations of the form
$
w_i\to w_iw_j^{\pm1}
\qqbox{and}
x_i\to x_ix_j^{\pm1}.
$
The obtained system has the same number of solutions
(in $G$ or in $H$)
as the initial one.
Suppose that this system has a solution in $G$ and
suppose also that each word $u_i$ is a product of~$s$
commutators in $H*F(x_1,x_2,\dots)$. Then each single equation
$x_i^{n_i}[y_1,z_1]\dots[y_s,z_s]=h_i$
(where $y_j$ and $z_j$ are new variables)
has a solution in $G$ and,
therefore, has a solution $\(\^x_i,\^y_1^{(i)},\^z_1^{(i)},\dots\)$
in $H$ (because the subgroup $H$ is verbally
closed). Then,
obviously, $\(\^x_1,\^x_2,\dots\)$ is a solution to the entire system
(since the commutator subgroup of~$H$ is trivial).

The case of virtually cyclic subgroup $H$ can be easily reduced to
the case of cyclic $H$ by virtue of the following observation:
\disp{\sl
an infinite virtually cyclic group containing no
infinite noncyclic abelian subgroups is either cyclic or
dihedral.
}%
Indeed, any virtually cyclic group contains a finite
normal subgroup such that the quotient group is either cyclic or
dihedral (see, e.g., \cite{[Sta71]}). This finite normal
subgroup must be trivial because
otherwise we can find in its
centraliser (which is of finite index) an element of infinite
order and obtain an infinite noncyclic abelian subgroup.

\medskip

Now, consider the more difficult case of non-virtually-cyclic group $H$.

\begin{Lemma}\label{Lem2}
In a virtually free group which is not virtually cyclic,
any element decomposes into a product of two infinite-order elements.
\end{Lemma}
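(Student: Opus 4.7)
The plan is to use the action of $H$ on its Bass--Serre tree $T$, which exists with finite vertex and edge stabilizers by the Karrass--Pietrowski--Solitar theorem applied to the virtually free group $H$. Since vertex stabilizers are finite, an element of $H$ acts on $T$ as an elliptic isometry (fixing some vertex) if and only if it has finite order, and as a hyperbolic isometry (translating along an invariant axis) if and only if it has infinite order. The hypothesis that $H$ is not virtually cyclic ensures that $T$ is not a line and that $H$ contains a nonabelian free subgroup whose generators act hyperbolically on $T$ with independent axes.

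Given $g\in H$, I shall produce a hyperbolic $h\in H$ such that $gh^{-1}$ is also hyperbolic; then $g=(gh^{-1})\cdot h$ is the required decomposition into infinite-order factors. The main tool is the following standard lemma on tree actions: if isometries $\alpha,\beta$ of a tree have disjoint characteristic subtrees $C_\alpha,C_\beta$ (the axis if hyperbolic, the fixed-point subtree if elliptic), and at least one of them is hyperbolic, then $\alpha\beta$ is hyperbolic. Taking $\alpha=g$ and $\beta=h^{-1}$ reduces the problem to finding a hyperbolic $h$ whose axis $A_h$ is disjoint from the characteristic subtree $C_g$ of $g$.

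If $g$ acts trivially on $T$ (equivalently, $g$ lies in the finite kernel of the action), then for any hyperbolic $h$ the element $gh^{-1}$ acts on $T$ exactly as $h^{-1}$ does and therefore has infinite order. Otherwise $C_g$ is a proper subtree, and I construct the required $h$ by a ping-pong argument: starting from two hyperbolic elements $a,b\in H$ with independent axes --- available from the nonabelian free subgroup of $H$ guaranteed by non-virtual-cyclicity --- and replacing them by suitable conjugates and sufficiently high powers, one produces a hyperbolic element whose axis can be positioned in any prescribed region of $T$, in particular disjoint from $C_g$.

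The main obstacle is this last step, placing a hyperbolic axis disjoint from the prescribed proper subtree $C_g$, particularly when $C_g$ is infinite (e.g.\ when $g$ is itself hyperbolic, so that $C_g=A_g$ is a line). Non-virtual-cyclicity is essential here: a virtually cyclic action would concentrate every axis on the single line $T$ and make avoidance impossible, whereas the abundance of independent hyperbolic axes supplied by the free subgroup of $H$, combined with the minimality of the $H$-action on its minimal invariant subtree, permits the required positioning.
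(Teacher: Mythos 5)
Your overall framework is the same as the paper's (the KPS action on a tree with finite stabilizers, the dictionary finite order $\leftrightarrow$ elliptic, the standard fact that a product of two isometries with disjoint characteristic subtrees, one of them hyperbolic, is hyperbolic, and the idea of conjugating by powers to move an axis away from the characteristic set). But the step you yourself flag as ``the main obstacle'' --- producing a hyperbolic element $h$ whose axis is disjoint from the proper subtree $C_g$ --- is exactly where all the work of the lemma lies, and you do not prove it: the sentence about ping-pong with ``suitable conjugates and sufficiently high powers'' positioning an axis ``in any prescribed region of $T$'' is an assertion, not an argument, and as stated it is not justified. The difficulty is that $C_g$ can be an \emph{infinite} subtree even when $g$ is elliptic and not in the kernel (its fixed tree may contain whole axes of other elements), and when $g$ is hyperbolic $C_g$ is a line; to move another axis off such a set by conjugation you need some control on how translates of an axis intersect $C_g$ (a bounded-overlap or single-point-intersection statement), and nothing in your text supplies it. (The hyperbolic case could have been dismissed outright via $g=g^2\cdot g^{-1}$, which would at least shrink the problem to elliptic $g$, but you do not use this shortcut.) A secondary, minor point: the KPS theorem you invoke is about finitely generated groups, so one should first reduce to a finitely generated subgroup containing the given element together with a rank-two free subgroup, as the paper does at the outset.

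For comparison, the paper closes precisely this gap with a concrete dichotomy instead of a general positioning claim: for a finite-order $h$ with fixed tree $T_h$ and a hyperbolic $g$ with axis $l_g$, either $g^{-1}h$ is already fixed-point free (and $h=g\cdot(g^{-1}h)$ is the desired decomposition), or $g^{-1}h$ fixes a point $a$, in which case $h(a)=g(a)=b$ forces the geodesic $[a,b]$ to cross $T_h$ in exactly one point $c$ and forces $l_g\cap T_h=\{c\}$. Repeating this with a second hyperbolic $g'$ having a different axis gives either a decomposition or $l_{g'}\cap T_h=\{c'\}$, and then the conjugate $g''=g^kg'g^{-k}$ has axis $g^k(l_{g'})$, which misses $T_h$ entirely for a suitable $k$, so $(g'')^{-1}h$ is fixed-point free and $h=g''\cdot((g'')^{-1}h)$. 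To repair your write-up you would need to supply an argument of this kind (or an equivalent bounded-overlap lemma) in place of the unproved positioning claim.
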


\begin{proof}
Clearly, it suffices to prove this assertion for
finitely generated groups.
Recall, that
a finitely generated
virtually free group
admits an
action on a
(directed) tree such that the stabilisers of vertices are finite
\cite{[KPS73]}. Recall also that any fixed-point-free automorphism of
a tree has a unique invariant line (the \emph{axis})~\cite{[Ser77]}.

Consider such an action of a group $H$ on a tree $T$.
Let $h\in H$ be an element we want to decompose
and let $T_h\subseteq T$ be the fixed point set for $h$.
If the order of $h$ is infinite, then $h=h^2h^{-1}$ is the required
decomposition; therefore, we assume that the order of $h$ is finite and
$T_h$ is nonempty (and, hence, connected).

Take some element $g\in H$ of infinite order
with an axis $l_g$.
If~$g^{-1}h$ has no fixed points, then its order
is infinite and $h=g\cdot(g^{-1}h)$ is a required decomposition.
Let $a\in T$ be a fixed point for $g^{-1}h$. Then $h(a)=g(a)=b$.

The equality $h(a)=b$ shows that the
path joining $a$ and $b$ must intersect the
subtree~$T_h$ in a unique point~$c$, and $c$ is the midpoint
of this path.
The equality $g(a)=b$ shows that the line $l_g$ passes through
$c$ and intersects the segments $[a,c]$ and $[c,b]$ in segments
of the same
nonzero length $\delta$ (otherwise, the distances
from~$l_g$ to $a$ and to $b$ are different and, hence, $g(a)\ne b$).
The element $g$ must act as a translation by
$2\delta$ on its axis $l_g$. However, we only need the
equality $T_h\cap l_g=\{c\}$ now.

Take another infinite-order element $g'\in H$ with another
axis~$l_{g'}\ne l_g$ (such an element exists because
the group $H$ is not virtually cyclic). A similar argument shows that
either $h=g'\cdot((g')^{-1}h)$ is the required decomposition or
$T_h\cap l_{g'}=\{c'\}$ for some point $c'$.
In the latter case, take the element
$g''=g^kg'g^{-k}$.
Its axis is $g^k(l_{g'})$ obviously;
and we see that this line does not
intersect the subtree $T_h$ if the number
$k$ is chosen large enough (positive or negative).
Therefore, in this last case,
$h=g''\cdot((g'')^{-1}h)$ is the required decomposition.
(More precisely, if $c'\ne c$, then $k$ can be chosen equal to 1 or
any other nonzero number; if  $c'=c$, then $k$
should be choose in such a way that
$|2k\delta|$
is larger than the distance from $c$
to the endpoint of the segment or half-line $l_g\cap l_{g'}$.)
\end{proof}

\begin{Lemma}\label{Lem4}
If $h_1$ and $h_2$ are infinite-order elements of a
 virtually free group all whose infinite abelian
subgroup are cyclic and $h_1^k=h_2^k$ for some nonzero integer $k$,
then $h_1=h_2$.
\end{Lemma}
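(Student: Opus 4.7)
\textbf{Proof plan for Lemma \ref{Lem4}.}
The plan is to look at the centraliser of the common power $c=h_1^k=h_2^k$, show that the subgroup $K=\gp{h_1,h_2}$ generated by the two elements lives inside this centraliser, and then use the dichotomy from the paragraph preceding Lemma~\ref{Lem2} to force $K$ to be (essentially) cyclic.

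First I would note that $c=h_1^k=h_2^k$ has infinite order because $h_1$ has infinite order and $k\ne0$, and that $h_1,h_2\in C(c)$ since $c$ is a power of each. The main ingredient I intend to use is the standard fact that in a virtually free group the centraliser of an infinite-order element is virtually cyclic: such an element acts on the Bass--Serre tree (from \cite{[KPS73]}, as used already in Lemma~\ref{Lem2}) as a hyperbolic automorphism with a unique axis, and everything commuting with it must preserve that axis setwise, which together with the finiteness of edge stabilisers gives virtual cyclicity. Granting this, $C(c)$ is virtually cyclic, hence so is the intermediate subgroup $K=\gp{h_1,h_2}$.

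Now $K$ is infinite (it contains $h_1$), virtually cyclic, and, being a subgroup of $H$, contains no infinite noncyclic abelian subgroups. By the observation just above Lemma~\ref{Lem2}, $K$ is therefore either infinite cyclic or infinite dihedral. In the cyclic case, write $h_1=g^a$ and $h_2=g^b$ for a generator $g$ of $K$; the equality $g^{ak}=g^{bk}$ together with $g$ having infinite order forces $a=b$, so $h_1=h_2$. In the dihedral case $K\cong\Z/2*\Z/2$, all infinite-order elements of $K$ lie in the unique infinite cyclic subgroup of index~$2$ (the elements outside it have order~$2$); hence again $h_1,h_2$ lie in a common infinite cyclic subgroup and the previous argument applies.

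The only real obstacle is the (folklore) claim that centralisers of infinite-order elements in virtually free groups are virtually cyclic; once that is in hand, the rest is a one-line application of the cyclic-or-dihedral trichotomy already invoked in the reduction from virtually cyclic $H$ to cyclic $H$ earlier in this section, so no further machinery is needed.
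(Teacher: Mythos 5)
Your proposal is correct, and its skeleton (pass to the centraliser of the common power $c=h_1^k=h_2^k$, show the relevant group is virtually cyclic, then apply the cyclic-or-dihedral dichotomy) is the same as the paper's; the two differ in how the key steps are justified. The paper proves more, namely that $C(c)$ is genuinely \emph{cyclic}: it observes that $C(c)$ is virtually free with infinite centre, so a free subgroup of finite index meets the centre nontrivially and hence is cyclic, making $C(c)$ virtually cyclic; the dichotomy then leaves only the cyclic and dihedral options, and dihedral is excluded because the infinite dihedral group has trivial centre while $C(c)$ contains the central infinite-order element $c$. Since roots of $c$ lie in $C(c)$, both $h_1$ and $h_2$ are powers of one generator and the conclusion follows at once. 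You instead establish virtual cyclicity of the centraliser by the Bass--Serre axis argument and keep the dihedral possibility alive for $K=\gp{h_1,h_2}$, dispatching it by noting that all infinite-order elements of $\Z/2*\Z/2$ lie in the index-two infinite cyclic subgroup (in fact, since $h_1,h_2$ generate $K$ and would both be translations, the dihedral case cannot occur at all). Your route is fine, but note two small points: the paper's centre argument is entirely algebraic and applies verbatim to virtually free groups of any cardinality, whereas the action with finite vertex stabilisers from \cite{[KPS73]} is stated for finitely generated groups, so your tree argument needs the one-line reduction of first replacing the ambient group by a finitely generated subgroup containing $h_1$ and $h_2$ (e.g.\ $K$ itself, in which $c$ is central), exactly as the paper does at the start of the proof of Lemma~\ref{Lem2}; with that remark added, your ``folklore'' centraliser claim is indeed correct and your proof goes through.
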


\begin{proof}
The
roots of an element lie in its centraliser, therefore, it
suffices to show that the centraliser of an infinite-order element $h$ is
cyclic. This centraliser $C(h)$ is a virtually free group
(as a subgroup of a virtually free group) with infinite centre.
This implies immediately that $C(h)$ is virtually cyclic.
It remains to refer to the fact mentioned in the beginning of this
section: a virtually cyclic group without
infinite
abelian noncyclic subgroups is
either cyclic or dihedral. In the case under consideration, the group
cannot be dihedral because
the centre of the centraliser is nontrivial.
\end{proof}

The following lemma is well known.
\begin{Lemma}\label{LemWA}
If a subgroup $H$ of a group $G$ is such that any finite system
of equations of the
form
$$
\{w_1(x_1,\dots,x_n)=h_1,\ \dots,\  w_m(x_1,\dots,x_n)=h_m\},
\qbox{where $w_i\in F(x_1,\dots,x_n)$ and $h_i\in H$,}
\eqno{(1)}
$$
having a solution in $G$
has a solution in $H$ too, then $H$ is algebraically closed.
\end{Lemma}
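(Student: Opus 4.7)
The plan is to reduce the general case of a finite system with coefficients in $H$ to the coefficient-free form~(1) by the standard device of eliminating coefficients in favour of extra unknowns.

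More concretely, suppose we are given a finite system
$\{w_1(x_1,\dots,x_n)=1,\ \dots,\ w_m(x_1,\dots,x_n)=1\}$ with $w_i\in H*F(x_1,\dots,x_n)$ which has a solution in $G$. Only finitely many elements of $H$, say $h_1,\dots,h_l$, appear as coefficients in the words $w_i$. I would introduce new variables $y_1,\dots,y_l$ and rewrite each $w_i$ in the form $w_i=v_i(x_1,\dots,x_n,h_1,\dots,h_l)$, where $v_i\in F(x_1,\dots,x_n,y_1,\dots,y_l)$ is obtained by replacing each coefficient $h_j$ by the variable $y_j$.

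Then I would consider the enlarged system
\disp{\{v_1(x_1,\dots,x_n,y_1,\dots,y_l)=1,\ \dots,\ v_m(x_1,\dots,x_n,y_1,\dots,y_l)=1,\ y_1=h_1,\ \dots,\ y_l=h_l\},}
whose left-hand sides all lie in the coefficient-free group $F(x_1,\dots,x_n,y_1,\dots,y_l)$ and whose right-hand sides all lie in $H$; hence it is of the form~(1). A solution $(g_1,\dots,g_n)\in G^n$ of the original system gives the solution $(g_1,\dots,g_n,h_1,\dots,h_l)\in G^{n+l}$ of the enlarged one. By hypothesis, the enlarged system admits a solution $(g_1',\dots,g_n',y_1',\dots,y_l')\in H^{n+l}$; but the equations $y_j=h_j$ force $y_j'=h_j$, so substituting back yields $w_i(g_1',\dots,g_n')=v_i(g_1',\dots,g_n',h_1,\dots,h_l)=1$ for every $i$. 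Thus $(g_1',\dots,g_n')\in H^n$ is a solution of the original coefficient system in $H$, proving that $H$ is algebraically closed.

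There is no real obstacle here: the only subtle point is checking that the adjoined equations $y_j=h_j$ are genuinely of the form permitted in~(1) (the left-hand side $y_j$ is a one-letter word in $F$, the right-hand side $h_j$ is in $H$), so that the hypothesis on $H$ can indeed be applied to the enlarged system.
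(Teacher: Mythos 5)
Your proof is correct and is essentially the paper's own argument: the paper likewise eliminates the coefficients by replacing each $h_j$ with a fresh variable and adjoining the equations $y_j=h_j$, thereby converting the system to the form (1). Nothing further is needed.
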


\begin{proof}
Just denote the coefficients by new letters and interpret them as
variables. For example, the solvability of the equation
$xyh_1[x^{\the\year},h_2]y^{-1}=1$
is equivalent to the solvability of the system
$$
\{xyz[x^{\the\year},t]y^{-1}=1,\
z=h_1,\
t=h_2\}.
$$
\end{proof}


Now, we need a useful tool.
Recall that a \emph{Lee word}
in $m$ variables
for the free
group of rank $r$
is an element
$L(z_1,\dots,z_m)$ of the free group of rank $m$ such that
\goodbreak
\begin{itemize}
\item[1)]
if $L(v_1,\dots,v_m)=L(v_1',\dots,v_m')\ne1$ in $F_r$, then
$v_i'\in F_r$ are obtained from $v_i\in F_r$ by
simultaneous conjugation, i.e.,
there exists $w\in F_r$ such that
$v_i'=v_i^w$ for all $i=1,\dots,m$;

\nobreak
\item[2)]
$L(v_1,\dots,v_m)=1$ if and only if the elements
$v_1,\dots,v_m$ of $F_r$ generate a cyclic subgroup.

\end{itemize}

\goodbreak

In \cite{[Lee02]}, such words were constructed for all
integers $r,m\ge2$.
Actually, it is easy to see that Lee's result implies
the existence of
a \emph{universal Lee word} in $m$ variables.

\begin{Lemma}
For any positive integer $m$, there exists
an element
$L(z_1,\dots,z_m)\in F_m$
such that
properties {\rm 1)} and {\rm 2)} hold in
all free groups
$F_r$ and even in $F_\infty$.
\end{Lemma}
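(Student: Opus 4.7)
The plan is to take $L := L_{N,m}$, any Lee word for $F_N$ provided by~\cite{[Lee02]}, with $N := 2m+1$; we will show that this single element of $F_m$ satisfies properties 1) and 2) in \emph{every} free group $F$. (The case $m=1$ is trivial: take $L=1$, since a single element always generates a cyclic subgroup.)

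For property 2) in an arbitrary free group $F$, the argument in each direction reduces to Lee's property 2) for $F_N$ via an injective embedding into $F_N$. If $\gp{v_1,\dots,v_m} \leq F$ is cyclic, say $v_i = t^{n_i}$, sending $t$ to any infinite-order element of $F_N$ embeds $\gp{t}$ into $F_N$, and Lee's property 2) for $F_N$ combined with injectivity gives $L(v_i)=1$. Conversely, if $L(v_i)=1$, then $\gp{v_1,\dots,v_m}$ is free of rank at most $m \leq N$, hence embeds into $F_N$ by sending a basis to some of the free generators; Lee's property 2) for $F_N$ plus injectivity forces $\gp{v_i}$ to be cyclic.

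The heart of the lemma is property 1). Given $v_i, v_i' \in F$ with $L(v_i) = L(v_i') \neq 1$, set $K := \gp{v_1,\dots,v_m,v_1',\dots,v_m'}$. By Nielsen--Schreier, $K$ is free of rank $k \leq 2m < N$. Embed $\phi \colon K \hookrightarrow F_N$ by sending a basis of $K$ to the first $k$ of the $N$ free generators of $F_N$; then $\phi(K)$ is a \emph{proper} free factor of $F_N$, hence \emph{malnormal} (a standard consequence of the normal form in a free product, or of Bass--Serre theory). Lee's property 1) in $F_N$ yields $u \in F_N$ with $\phi(v_i') = \phi(v_i)^u$. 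Since $L(v_i) \neq 1$, some $v_{i_0} \neq 1$, and $\phi(v_{i_0})$ lies in $\phi(K) \cap u\phi(K)u^{-1} \setminus \{1\}$. Malnormality then forces $u \in \phi(K)$; writing $u = \phi(w)$ with $w \in K \subseteq F$ gives $v_i' = v_i^w$ in $F$.

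The main obstacle is precisely this pull-back step in property 1): Lee's theorem produces a conjugator only inside the auxiliary $F_N$, whereas we need one in the ambient (possibly much larger) group $F$. Malnormality of a proper free factor is the right tool to force the conjugator into $\phi(K)$, where it lifts back to an element of $K$; this is what motivates the slightly wasteful choice $N = 2m+1$, which ensures that $\phi(K)$ is always a \emph{proper} free factor regardless of the rank of $K$. Property 2) is routine by comparison, once one notices that every substitution instance lives inside a finitely generated free subgroup that embeds into $F_N$.
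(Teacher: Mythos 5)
Your proof is correct, and it reaches the lemma by a genuinely different route than the paper, although both arguments rest on the same underlying mechanism: transfer the tuples into a free group of fixed rank where Lee's theorem is available, and then use malnormality of the image subgroup to force the conjugator supplied by property 1) back into that image, where it pulls back to the ambient free group. The paper realizes this by citing Wise's result (a $C(5)$ subset of a free group freely generates a malnormal subgroup) to embed $F_\infty$ malnormally into $F_2$, so that the single Lee word for $F_2$ in $m$ variables is universal. You instead take Lee's word for $F_{2m+1}$ and embed the finitely generated subgroup $K=\gp{v_1,\dots,v_m,v_1',\dots,v_m'}$, of rank at most $2m$, as a proper free factor of $F_{2m+1}$, invoking only the elementary fact (immediate from free-product normal forms) that a proper free factor of a free group is malnormal; this makes the argument self-contained, avoiding small-cancellation theory altogether, at the harmless price of using a Lee word whose reference rank $2m+1$ grows with $m$ --- harmless because Lee constructs such words for all $r,m\ge2$, and the application in the paper only needs one fixed $m$. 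Your reduction of property 2) to the rank-$N$ case via injective homomorphisms, your observation that $L(v_1,\dots,v_m)\ne1$ forces some $v_{i_0}\ne1$ (needed before applying malnormality), and your separate trivial choice $L=1$ for $m=1$ are all sound.
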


\begin{proof}
This assertion follows immediately from Lee's result and the following
simple fact:
\disp{\sl
$F_\infty$ embeds into $F_2$ as a \emph{malnormal} subgroup,
}%
i.e. a subgroup
$S\subset F_2$
such that $S^f\cap S=\{1\}$ for all $f\in F_2\setminus S$.
This fact
follows, e.g., from a result of~\cite{[Wise01]}:
\disp{\sl
\hfuzz 6pt
\narrower
\narrower
\narrower
\narrower
\narrower
\narrower
\narrower
in a free group, any set
satisfying small-cancellation condition $C(5)$ freely
generates a malnormal subgroup.
}
Thus, a Lee word for $F_2$ is universal, i.e. it is suitable also
for $F_\infty$.
\end{proof}

Let us proceed with the proof of the Main Theorem.
Thus, we assume that a verbally closed subgroup~$H$ of a group $G$ is
virtually free, does not contain
infinite
noncyclic abelian subgroups,
and contains a
normal (in~$H$) non-abelian free subgroup $F$ of index $N$ (in~$H$).
Applying Lemma \ref{LemWA}, we can assume that
system (1) has a solution in $G$ and we have to show
that this system has a solution in $H$.

Let
$L(z_1,\dots,z_{2m+2})$ be a
universal Lee word in $2m+2$ variables.
By Lemma \ref{Lem2}, for each element $h_i$, we find
an infinite-order element $f_i\in H$ such that the order of $h_if_i$
is also infinite. Take two noncommuting element $u_1,u_2\in F$ and
consider the equation
$$
L\Bigl((w_1(x_1,\dots,x_n)y_1)^N,\dots,(w_m(x_1,\dots,x_n)y_m)^N,
y_1^N,\dots,y_m^N,
z_1^N,z_2^N\Bigr)
=f,
$$
where
$f=L\Bigl((h_1f_1)^N,\dots,(h_mf_m)^N,
f_1^N,\dots,f_m^N,
u_1^N,
u_2^N\Bigr)
\in F$.
This equation has a
solution in $G$ by construction
(just take the following:
a solution to system (1) as $x_i$,
elements $f_i$ as $y_i$, and
$u_i$ as $z_i$).

The subgroup $H$
is verbally closed in $G$ and $f\in H$; thus, the last equation has
a solution~$\^x_i,\^y_j,\^z_k$ in $H$.

The right-hand side of the equation is a value of a Lee word on some
elements of the free group $F$ (because $h^N\in F$ for all $h\in H$);
the elements  $u_1^N$ and $u_2^N$ do not commute
(because the elements $u_1$ and $u_2$ of a free group
are
chosen non-commuting), so they do not lie in the same cyclic subgroup and,
therefore,
Property 2) of Lee words implies that $f\ne1$.
According to Property 1),
we have
$$
(w_i(\^x_1,\dots,\^x_n)\^y_i)^{Nw}=(h_if_i)^N,
\quad
\^y_i^{Nw}=f_i^N
\qqbox{and}
\^z_i^{Nw}=u_i^N
\qbox{for some $w\in F$.}
$$
By
Lemma \ref{Lem4}, we obtain the equalities
$$
(w_i(\^x_1,\dots,\^x_n)\^y_i)^w=h_if_i,
\quad
\^y_i^w=f_i,
\qqbox{and}
\^z_i^w=u_i,
$$
i.e. $\^x_i^w\in H$ is a solution to system (1)
in $H$ as required.



\end{document}

